\documentclass[12pt,twoside]{amsart}
\usepackage[latin1]{inputenc}
\usepackage{times}
\usepackage{amsthm}
\usepackage{amssymb, verbatim}
\usepackage{amsmath}
\usepackage{mathrsfs}
\usepackage{hyperref}

\topmargin = - 40 pt 
\textheight = 725 pt 

\oddsidemargin = 0 pt 
\evensidemargin = 0 pt 
\textwidth      = 460 pt  

\usepackage{calc}
\usepackage{graphicx}

\usepackage[english]{babel}
\usepackage{latexsym}

\usepackage{amsfonts}
\usepackage{syntonly}
\usepackage{graphicx}
\usepackage{mathptmx}
\usepackage{subfig}
\usepackage{url}
\usepackage{lscape}
\usepackage[letterpaper]{geometry}
\geometry{verbose,tmargin=3cm,bmargin=3cm,lmargin=3cm,rmargin=3cm}
\input xy
\xyoption{all}

\newtheorem{theorem}{Theorem}[section]

\newtheorem{lemma}[theorem]{Lemma}

\newtheorem{proposition}[theorem]{Proposition}

\newtheorem{cor}[theorem]{Corollary}

\newcommand     {\Rset}    {{\mathbb R}}

\newcommand     {\Nset}    {{\mathbb N}}

\DeclareMathOperator{\less} {\textit{less}}
\DeclareMathOperator{\more} {\textit{more}}

\def\quotient#1#2{%
    \raise1ex\hbox{$#1$}\Big/\lower1ex\hbox{$#2$}%
}

\begin{document}

\title[The cardinality of Kiselman's semigroups grows double-exponentially]
{The cardinality of Kiselman's semigroups\\ grows double-exponentially}

\author[A.~D'Andrea]{Alessandro D'Andrea}
\email{dandrea@mat.uniroma1.it}

\author[S.~Stella]{Salvatore Stella}
\email{salvatore.stella@univaq.it}

\date{\today}

\begin{abstract}
Let $K_n$ denote Ganyushkin-Kudryavtseva-Mazorchuk's generalization of Kiselman's semigroups. We show that the sequence $2^{-n/2}\cdot \log|K_n|$ admits finite limits as $n$ grows to infinity both on odd and even values.
\end{abstract}

\keywords{Kiselman's semigroup, asymptotics}

\subjclass{20M32, 05A16}

\maketitle


\section{Introduction}

Kiselman's semigroup $K_n$ is a generalization \cite{km} of a monoid introduced by Kiselman~\cite{k} in a convexity theory setting. It is related to $0$-Hecke algebras \cite{n}, arises as a quotient of the Richardson-Springer monoid \cite{rs}, describes the evolution properties of  certain graph dynamical systems \cite{cd}, and exhibits interesting combinatorics.

In \cite{km}, Kudryavtseva and Mazorchuk show that $K_n$ for $n \in \Nset$, is always finite by providing the explicit, quite broad, upper bound
$$|K_n| \leq 1 + n^{L(n)}$$
for their cardinality, where
$$L(n) = \begin{cases}
2^{k+1}-2, \qquad \mbox{ if } n = 2k\\
3 \cdot 2^k - 2, \qquad \mbox{ if } n = 2k+1.
\end{cases}$$
However, data from \cite{OEIS} suggest that the growth of $|K_n|$ is sharply double exponential in $n$.

In this short note, we would like to provide easy double exponential upper and lower bounds for $|K_n|$ and show that the sequence $ 2^{-n/2} \log|K_n|$ has a finite limit when restricted to either even or odd values of $n$. Throughout the paper, $\log$ denotes the base $2$ logarithm.

\section{Canonical words}

Henceforth $A$ will be a totally ordered set. The Kiselman semigroup $K(A)$ 
is defined by the monoid presentation
\begin{equation}\label{presentation}
K(A) = \langle a \in A \,\,|\,\, a^2 = a,\,\,  aba = bab = ab, \mbox{ for all } a<b\rangle.
\end{equation}
We will be concerned with the case where $A$ is finite. Then $K(A)$ depends, up to isomorphism, only on the cardinality of $A$. When $A = \{1, \dots, n\}$ with the standard total order, then $K(A)$ is usually denoted $K_n$; in this case, generators of $K(A)$ are denoted by $a_i$ to ease readability.

Let $W(A)$ be the monoid of finite words on the alphabet $A$. Then for every $x \in K(A)$ there exists a unique \cite{ad, km} shortest element (i.e., reduced expression) in $\pi^{-1}(x)$, where $$\pi: W(A) \to K(A)$$ is the canonical projection. Such reduced expressions admit a combinatorial description: $w \in W(A)$ is the shortest element in some $\pi^{-1}(x)$ if and only if whenever $a u a$ is a subword of $w$, where $u \in W(A)$ and $a \in A$, then $u$ contains both some letter $b>a$ and some letter $c<a$; elements of $W(A)$ satisfying the above conditions are called {\em canonical words} and elements in $K(A)$ are thus in bijection with canonical words in $W(A)$.

Notice that, due to the presentation \eqref{presentation}, the set of letters showing up in each word lying in $\pi^{-1}(x)$ only depends on $x\in K(A)$. Also denote by $\less(a)$ and $\more(a)$ the number of elements in $A$ that are less or more than $a$. The following easy facts are proved in \cite{km}.
\begin{proposition}\label{standard}
Let $w\in W(A)$ be the unique reduced expression of an element $x \in K(A)$. Then:
\begin{itemize}
\item each letter $a \in A$ occurs in $w$ at most $2^{\less(a)}$ times;
\item each letter $a \in A$ occurs in $w$ at most $2^{\more(a)}$ times;
\item the length of $w$ is less or equal than $L(|A|)$.
\end{itemize}
\end{proposition}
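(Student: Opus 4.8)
The plan is to establish the three bullets in order, obtaining the second from the first by a symmetry of the presentation and the third by summing the per-letter bounds. The engine behind the first two is a single induction on $|A|$ that peels off an extreme letter of the alphabet; this peeling is clean precisely because an extreme letter can occur at most once in a canonical word.

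First I would prove the bound $2^{\less(a)}$ by induction on $n=|A|$, the cases $n\le 1$ being immediate. Set $z=\min A$ and $A'=A\setminus\{z\}$. The key preliminary observation is that $z$ occurs at most once in $w$: were $zuz$ a factor of $w$, the characterization of canonical words recalled above would force $u$ to contain a letter smaller than $z$, which does not exist. Consequently $w$ factors as $w=w_1zw_2$ (degenerating to $w=w_1$ when $z$ is absent), with $w_1,w_2\in W(A')$.

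The crux is that $w_1$ and $w_2$ are themselves canonical over the smaller alphabet $A'$. Indeed, $w_1$ is a prefix and $w_2$ a suffix of $w$, so any factor $a'u'a'$ of $w_1$ or of $w_2$ is a factor of $w$; the canonical condition then furnishes letters of $u'$ both larger and smaller than $a'$, and these necessarily lie in $A'$ since $u'$ does. Because $z<a$ for every $a\in A'$ we have $\less_A(a)=\less_{A'}(a)+1$, so the inductive hypothesis applied to $w_1$ and $w_2$ yields
$$n_a(w)=n_a(w_1)+n_a(w_2)\le 2\cdot 2^{\less_{A'}(a)}=2^{\less_A(a)},$$
where $n_a(\cdot)$ counts occurrences; and $n_z(w)\le 1=2^{\less(z)}$. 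Running the identical argument with $z=\max A$ (again occurring at most once) gives the second bullet; alternatively it follows from the first by the evident symmetry of \eqref{presentation} under simultaneously reversing every word and the total order on $A$, which preserves canonicity while interchanging $\less$ and $\more$.

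For the length bound I would simply sum. Since $n_a(w)\le\min\!\bigl(2^{\less(a)},2^{\more(a)}\bigr)$ for each $a$, listing $A$ in increasing order as $a_1<\dots<a_n$ (so that $\less(a_j)=j-1$ and $\more(a_j)=n-j$) bounds the length of $w$ by $\sum_{j=1}^{n}\min(2^{j-1},2^{n-j})$. Splitting this sum at its balance point near $j=n/2$ turns it into two geometric sums, and a short computation returns $2^{k+1}-2$ when $n=2k$ and $3\cdot2^{k}-2$ when $n=2k+1$, that is, exactly $L(n)$. I expect the only genuine obstacle to be the crux of the third paragraph, namely verifying that cutting $w$ at the unique occurrence of an extreme letter leaves two honestly canonical words over $A'$; the base case, the symmetry, and the final summation are then routine.
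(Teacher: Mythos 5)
Your argument is correct. Note, though, that the paper does not prove Proposition \ref{standard} in place: it cites \cite{km}, and only in Section 5 does it indicate the intended argument, namely Lemma \ref{plusone}, which says that the number of occurrences of $a$ in a canonical word is at most one more than the total number of occurrences of letters smaller than $a$ (because between any two occurrences of $a$ there must lie a smaller letter). From that single inequality the bound $2^{\less(a)}$ follows by induction on $\less(a)$ \emph{within a fixed word}, since $1+\sum_{j<i}2^{j-1}=2^{i-1}$; no decomposition of $w$ is needed. Your route instead inducts on $|A|$: you peel off the minimal letter $z$, observe that it occurs at most once, split $w=w_1zw_2$, and check that $w_1,w_2$ remain canonical over $A\setminus\{z\}$ --- this sub-canonicity check is the one step the paper's argument avoids, though it is easy since the defining condition is inherited by factors. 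Both proofs rest on the same germ (there is no letter below the minimum), and your splitting at an extremal letter is exactly the mechanism the paper exploits elsewhere (Proposition \ref{below} and the structure result of Section 5), so it fits the paper naturally; the paper's counting inequality is simply more economical, handling all letters of one word in a single pass. Your treatment of the second bullet by order-plus-word reversal and of the length bound by summing $\min(2^{\less(a)},2^{\more(a)})$ over $a$ and evaluating the two geometric sums is sound and indeed returns $L(|A|)$.
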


\section{A bound from below}

Providing a double exponential lower bound for the cardinality $|K_n|$ of Kiselman's semigroups is easy.
\begin{proposition}\label{below}
One has $|K_{n+2}| \geq 2 |K_{n}|^2$ for every $n \in \Nset$. In particular,
\begin{equation}\label{increasing}
2^{-(n+2)/2}\log( 2|K_{n+2}|) \geq 2^{-n/2}\log (2|K_n|),
\end{equation}
for every $n \in \Nset$.
\end{proposition}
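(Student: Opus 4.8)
The plan is to prove the multiplicative inequality $|K_{n+2}| \geq 2|K_n|^2$ by an explicit injection, and then derive the inequality \eqref{increasing} as a purely algebraic consequence by taking logarithms.

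\medskip

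For the main inequality, I would work with canonical words rather than with the semigroup elements directly, using the bijection recalled in Section~2. Write the alphabet of $K_{n+2}$ as $\{a_1, \dots, a_{n+2}\}$ and single out the smallest letter $a_1$ and the largest letter $a_{n+2}$; the middle letters $a_2, \dots, a_{n+1}$ form a copy of the alphabet of $K_n$. The idea is to take two arbitrary elements $x, y \in K_n$, represented by their canonical words $u, v$ on this middle alphabet, and to build from them a canonical word on the full alphabet. A natural candidate is a concatenation of the form $u \, a_1 \, v \, a_{n+2}$ (or some similar sandwich interleaving $u$ and $v$ with the two extreme letters), chosen so that the resulting word is forced to be canonical. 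The two copies of $K_n$ account for the factor $|K_n|^2$; the extra factor of $2$ should come from a binary choice in the construction, for instance whether one ends with $a_{n+2}$ or not, or by exploiting a symmetry (the order-reversing automorphism of $A$) that produces a second disjoint family of the same size. I would then verify injectivity by arguing that the original words $u$ and $v$ can be uniquely recovered from the constructed canonical word, typically by reading off the positions of the extreme letters $a_1, a_{n+2}$ and stripping them away.

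\medskip

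The deduction of \eqref{increasing} from $|K_{n+2}| \geq 2|K_n|^2$ is routine: multiplying the hypothesis by $2$ gives $2|K_{n+2}| \geq (2|K_n|)^2$, so taking base-$2$ logarithms yields $\log(2|K_{n+2}|) \geq 2\log(2|K_n|)$. Dividing both sides by $2^{(n+2)/2} = 2 \cdot 2^{n/2}$ gives exactly \eqref{increasing}, since the factor of $2$ on the right-hand side cancels the doubling from the logarithm. This shows that the sequence $2^{-n/2}\log(2|K_n|)$ is nondecreasing along steps of size $2$, which is the form in which monotonicity will later feed into the limit statement of the abstract.

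\medskip

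The main obstacle I anticipate is the construction of the injection, specifically verifying that the sandwiched word $u \, a_1 \, v \, a_{n+2}$ is genuinely canonical and that distinct pairs $(u,v)$ yield distinct canonical words (equivalently, distinct elements of $K_{n+2}$). The canonicity condition requires that every subword of the form $a u' a$ contain both a strictly larger and a strictly smaller letter; the choices of $a_1$ and $a_{n+2}$ as separators are designed precisely so that this condition is automatically satisfied at the junctions, since $a_1$ is smaller and $a_{n+2}$ is larger than every middle letter. I would need to check carefully that no new forbidden repetitions are introduced and that the bijection between canonical words and semigroup elements lets me pass from the word-level injection to the claimed inequality on cardinalities.
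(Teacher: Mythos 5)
Your overall strategy --- building canonical words in $n+2$ letters by sandwiching two canonical words $u,v$ on the middle alphabet $\{a_2,\dots,a_{n+1}\}$ together with the extreme letters $a_1$ and $a_{n+2}$, then taking logarithms --- is the paper's strategy, and your derivation of \eqref{increasing} from $|K_{n+2}|\geq 2|K_n|^2$ is complete and correct. However, the explicit candidate you write down, $u\,a_1\,v\,a_{n+2}$, is \emph{not} canonical in general, and this is a genuine gap rather than the routine verification you defer. Take $u=v=a_{n+1}$: the word $a_{n+1}\,a_1\,a_{n+1}\,a_{n+2}$ contains the subword $a_{n+1}\,a_1\,a_{n+1}$, and between the two occurrences of $a_{n+1}$ there is a smaller letter ($a_1$) but no larger one, so the canonicity criterion fails. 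The observation you yourself make --- that both a smaller \emph{and} a larger letter must intervene between repeated occurrences --- is exactly why \emph{both} extreme letters must be placed between $u$ and $v$. The correct construction (and the paper's) is $u\,a_1\,a_{n+2}\,v$ and $u\,a_{n+2}\,a_1\,v$: any middle letter occurring once in $u$ and once in $v$ then has both $a_1$ and $a_{n+2}$ between its two occurrences, canonicity at the junction holds, and injectivity is immediate since $a_1$ and $a_{n+2}$ each occur exactly once and their positions recover $u$ and $v$.

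This placement also resolves the factor of $2$, which your proposal leaves unsettled: it comes from the two orderings $a_1 a_{n+2}$ versus $a_{n+2} a_1$ of the adjacent separator pair, which yield two visibly disjoint families of canonical words. Neither of your suggested mechanisms works as stated: omitting the final $a_{n+2}$ from $u\,a_1\,v$ leaves the same non-canonical configuration (only $a_1$ separates repeated middle letters), and the order-reversing permutation of the alphabet does not by itself respect the presentation \eqref{presentation}, since the relation $aba=bab=ab$ for $a<b$ is not symmetric in $a$ and $b$ (it descends only when combined with reversal of words, giving an anti-automorphism), so it is not clear how it would produce a second disjoint family from your words.
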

\begin{proof}
For every choice of canonical words $w, w' \in W(\{a_2, \dots, a_{n+1}\})$, words of the form $wa_1 a_{n+2}w'$ and $w a_{n+2} a_1 w'$ are still canonical. This shows that the number of canonical words in $n+2$ letters is at least twice the square of the number of canonical words in $n$ letters, thus yielding $|K_{n+2}| \geq 2 |K_{n}|^2$.

We may write this as $2|K_{n+2}| \geq (2|K_n|)^2\!$, or equivalently $\log(2|K_{n+2}|) \geq 2 \log(2|K_n|)$, which immediately proves \eqref{increasing}.
\end{proof}

\begin{cor}\label{atleast}
The cardinality of $K_n$ grows at least double exponentially.
\end{cor}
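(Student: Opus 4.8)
The plan is to read the inequality \eqref{increasing} of Proposition \ref{below} as a monotonicity statement and then anchor it at a positive base value. Set
$$b_n := 2^{-n/2}\log\bigl(2|K_n|\bigr).$$
With this notation \eqref{increasing} says precisely that $b_{n+2} \geq b_n$ for every $n \in \Nset$, so the sequence $b_n$ is non-decreasing along each of the two parity classes (even indices and odd indices separately). The point of the corollary is that, once we know $b_n$ is bounded below by a strictly positive constant, we recover a double-exponential lower bound for $|K_n|$ by unwinding the definition of $b_n$.

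First I would check that $b_n > 0$ for every $n$. Since $K(A)$ is never empty we have $|K_n| \geq 1$, hence $\log(2|K_n|) \geq \log 2 = 1$, which is strictly positive. (It is important here to use $2|K_n|$ rather than $|K_n|$, so that the logarithm is bounded away from $0$ even for small $n$.) Next I would anchor the two parity classes at their smallest indices: taking $n_0 = 0$ for the even class and $n_0 = 1$ for the odd class gives base values $b_0$ and $b_1$, both of which are strictly positive by the previous remark. Setting $c := \min(b_0, b_1) > 0$, monotonicity along each parity class yields $b_n \geq c$ for all $n \in \Nset$.

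Finally I would unwind the definition: from $2^{-n/2}\log(2|K_n|) \geq c$ we obtain $\log(2|K_n|) \geq c\, 2^{n/2}$, and therefore
$$|K_n| \geq \tfrac{1}{2}\, 2^{\,c\, 2^{n/2}}.$$
Since the exponent grows like $2^{n/2}$, this is a double-exponential lower bound on $|K_n|$, which is exactly the assertion of the corollary.

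There is no serious obstacle here; this is the easy direction, as the paper's phrasing suggests. The only point requiring a little care is that \eqref{increasing} compares values whose indices differ by $2$, so the even and odd subsequences must be treated separately and each must be anchored at its own base index. One should also make sure the anchoring constant $c$ is strictly positive (not merely nonnegative), which is why the estimate $\log(2|K_n|) \geq 1$ is phrased in terms of $2|K_n|$. If a concrete constant is desired, evaluating $b_0$ and $b_1$ explicitly from $|K_0| = 1$ and $|K_1| = 2$ gives an effective value of $c$, which could be improved by anchoring at larger indices.
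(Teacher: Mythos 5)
Your proposal is correct and is essentially the paper's own argument: the paper likewise iterates $2|K_{n+2}| \geq (2|K_n|)^2$ (equivalently, the monotonicity of $b_n$ along each parity class) from the base values $|K_0|=1$, $|K_1|=2$ to obtain $|K_n| \geq \frac{1}{2}\cdot 2^{2^{\lceil n/2\rceil}}$. Your version merely makes the anchoring explicit; anchoring each parity class separately recovers the paper's exponent $2^{\lceil n/2\rceil}$ exactly.
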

\begin{proof}
As $|K_0| = 1, |K_1| = 2$, we immediately see that 
$$|K_n| \geq \frac{1}{2} \cdot 2^{2^{\lceil{n/2}\rceil}}.$$
\end{proof}

\section{A bound from above}

We will use the following well known estimates.
\begin{lemma}
One has:
\begin{enumerate}\label{binomials}
\item\label{dueuno}
${2N \choose N} \leq \frac{2^{2N}}{\sqrt{\pi N}};$
\item\label{quattrotre}
${4N \choose N}{3N \choose N} \leq \frac{2^{6N}}{\pi N \sqrt{2}};$
\item\label{treuno}
${3N \choose N} \leq \left(\frac{27}{4}\right)^N,$
\end{enumerate}
for every positive $N \in \Nset$.
\end{lemma}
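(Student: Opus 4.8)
The plan is to prove the three estimates in the natural order of dependency, reducing the less standard bound in part~(2) to the classical central-binomial estimate in part~(1), and treating part~(3) by an independent elementary argument.

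For part~(1) I would introduce the normalized sequence $d_N = \binom{2N}{N}\,2^{-2N}\sqrt{\pi N}$ and show it increases to $1$. Writing $\binom{2N}{N}2^{-2N} = \prod_{k=1}^{N}\frac{2k-1}{2k}$, one computes
$$\frac{d_{N+1}}{d_N} = \frac{2N+1}{2N+2}\sqrt{\frac{N+1}{N}}, \qquad \left(\frac{d_{N+1}}{d_N}\right)^2 = \frac{(2N+1)^2}{4N(N+1)} = 1 + \frac{1}{4N(N+1)} > 1,$$
so $d_N$ is strictly increasing. Since $d_N \to 1$ as $N \to \infty$ by Stirling's formula, it follows that $d_N < 1$ for every finite $N$, which is precisely the claimed inequality.

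For part~(3) I would use the single-term lower bound furnished by the binomial theorem. Expanding $1 = \left(\tfrac13 + \tfrac23\right)^{3N}$ and discarding all summands except the one indexed by $N$ gives $1 \ge \binom{3N}{N}\left(\tfrac13\right)^{N}\left(\tfrac23\right)^{2N} = \binom{3N}{N}\,4^{N}/27^{N}$, and rearranging yields $\binom{3N}{N} \le (27/4)^{N}$. For part~(2) the key observation is the factorial identity obtained by cancelling $(3N)!$:
$$\binom{4N}{N}\binom{3N}{N} = \frac{(4N)!}{(N!)^2\,(2N)!} = \binom{4N}{2N}\binom{2N}{N}.$$
Applying part~(1) with $N$ replaced by $2N$ to the first factor and with $N$ to the second gives $\binom{4N}{2N} \le 2^{4N}/\sqrt{2\pi N}$ and $\binom{2N}{N}\le 2^{2N}/\sqrt{\pi N}$; multiplying these produces exactly $2^{6N}/(\pi N\sqrt{2})$, as required.

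The main obstacle is the sharp constant $\sqrt{\pi}$ in part~(1): the monotonicity of $d_N$ is entirely elementary, but identifying its limit as exactly $1$ requires a genuine analytic input, namely Stirling's formula (equivalently, Wallis' product). Once part~(1) is established, parts~(2) and~(3) are purely algebraic, so all the analytic difficulty is concentrated in a single step.
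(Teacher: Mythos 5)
Your proof is correct, and parts (1) and (3) take genuinely different routes from the paper, while part (2) coincides with it exactly (the same cancellation $\binom{4N}{N}\binom{3N}{N}=\binom{4N}{2N}\binom{2N}{N}$ followed by two applications of part (1)). For part (1) the paper starts from the integral representation $\binom{2N}{N}=\frac{2^{2N}}{\pi}\int_{-\pi/2}^{\pi/2}(\cos x)^{2N}\,dx$, bounds $(\cos x)^{2N}\le e^{-Nx^2}$, and extends to a Gaussian integral over $\Rset$; this is self-contained once the Wallis-type integral identity is granted. Your argument instead shows that $d_N=\binom{2N}{N}2^{-2N}\sqrt{\pi N}$ is strictly increasing via the ratio computation $(d_{N+1}/d_N)^2=1+\frac{1}{4N(N+1)}$ and then identifies $\lim d_N=1$ by Stirling; the monotonicity step is more elementary than the paper's integral estimates, but the sharp constant $\sqrt{\pi}$ is imported wholesale from Stirling (equivalently Wallis' product) rather than derived, so the two proofs simply relocate the analytic input. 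A side benefit of your version is that it yields the strict inequality $\binom{2N}{N}<2^{2N}/\sqrt{\pi N}$ and shows the bound is asymptotically sharp. For part (3) the paper only says the estimate is ``immediately proved by induction''; your single-term expansion of $1=\left(\tfrac13+\tfrac23\right)^{3N}\ge\binom{3N}{N}\tfrac{4^N}{27^N}$ is a clean one-line alternative that avoids any induction. All three parts are fully justified, so the proposal stands as a valid, and in places tidier, substitute for the paper's proof.
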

\begin{proof}
Part (1) is standard, and a well known proof\footnote{Some elementary proofs for stronger, and asymptotically optimal, estimates for the middle binomial coefficients may be found in \cite{MSE}.} follows from observing that
$${2N \choose N} = \frac{2^{2N}}{\pi}\int_{-\pi/2}^{\pi/2} (\cos x)^{2N} dx \leq 
\frac{2^{2N}}{\pi}\int_{-\pi/2}^{\pi/2} e^{-Nx^2} dx \leq \frac{2^{2N}}{\pi}\int_{-\infty}^{+\infty} e^{-Nx^2} dx = \frac{2^{2N}}{\sqrt{\pi N}}.$$
Then claim (2) is a consequence of part (1) as in 
$${4N \choose N}{3N \choose N} = {4N \choose 2N}{2N \choose N}\leq \frac{2^{4N}}{\sqrt{2\pi N}} \frac{2^{2N}}{\sqrt{\pi N}} = \frac{2^{6N}}{\pi N \sqrt{2}},$$
whereas (3) is immediately proved by induction.
\end{proof}

\begin{lemma}\label{howmanywords}
Let $n = 2k$ be an even number. The number of words in $W(\{a_1, \dots, a_n\})$ of length~$L(n)$ in which each $a_i, 1 \leq i \leq n,$ occurs $\min(2^{i-1}, 2^{n-i})$ times is bounded from above~by
$$\frac{2^{6 \cdot (2^{k} - 1)}}{(\pi\sqrt{2})^{k} 2^{k(k-1)/2}}.$$
\end{lemma}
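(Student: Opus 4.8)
The plan is to observe that the quantity to be bounded is in fact an exact count, and then to estimate it through a carefully chosen telescoping product of binomial coefficients matching the shape of the second inequality in the previous lemma.

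First I would record the multiplicities explicitly. Writing $n = 2k$, one checks that $\min(2^{i-1}, 2^{n-i})$ equals $2^{i-1}$ for $1 \le i \le k$ and equals $2^{n-i}$ for $k+1 \le i \le n$, so the sequence of multiplicities is the symmetric list $2^0, 2^1, \dots, 2^{k-1}, 2^{k-1}, \dots, 2^1, 2^0$, in which each value $2^j$ with $0 \le j \le k-1$ occurs exactly twice. Their sum is $2(2^k - 1) = 2^{k+1} - 2 = L(n)$, confirming that a word of length $L(n)$ with these prescribed letter counts is simply an arrangement of the corresponding multiset. Hence the number of such words is exactly the multinomial coefficient
$$\frac{L(n)!}{\prod_{j=0}^{k-1}\bigl(2^j!\bigr)^2},$$
and it suffices to bound this number.

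Next I would expand this multinomial coefficient as a telescoping product of binomials, placing the letters into the $L(n)$ available slots pair by pair, starting from the two innermost letters $a_k, a_{k+1}$ (of multiplicity $2^{k-1}$) and working outward toward $a_1, a_n$ (of multiplicity $2^0$). Setting $N = 2^j$, one verifies that just before inserting the pair of multiplicity $N$ there remain exactly $R_j = 2^{j+2} - 2 = 4N - 2$ free slots; placing the two equal-multiplicity letters then contributes the factor $\binom{4N-2}{N}\binom{3N-2}{N}$, and the identity $4N - 2 - 2N = 2N - 2 = R_{j-1}$ confirms that the product telescopes down to zero remaining slots. Thus
$$\frac{L(n)!}{\prod_{j=0}^{k-1}\bigl(2^j!\bigr)^2} = \prod_{j=0}^{k-1} \binom{4\cdot 2^j - 2}{2^j}\binom{3\cdot 2^j - 2}{2^j}.$$

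Finally I would estimate each factor. By monotonicity of binomial coefficients in the upper argument one has $\binom{4N-2}{N}\binom{3N-2}{N} \le \binom{4N}{N}\binom{3N}{N}$, which the second estimate of the previous lemma bounds by $2^{6N}/(\pi N \sqrt 2)$. Taking the product over $j = 0, \dots, k-1$ with $N = 2^j$, and using $\sum_{j=0}^{k-1} 2^j = 2^k - 1$ together with $\sum_{j=0}^{k-1} j = k(k-1)/2$, yields precisely the announced bound. The only genuinely non-routine step is recognizing the pairing and insertion order that turns the multinomial coefficient into a product of terms of the exact form $\binom{4N-2}{N}\binom{3N-2}{N}$ appearing in the previous lemma; the $-2$ offsets are then disposed of for free by monotonicity, and the remaining bookkeeping of exponents is elementary.
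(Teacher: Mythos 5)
Your proposal is correct and follows essentially the same route as the paper: both identify the count as the multinomial coefficient, factor it as the telescoping product $\prod \binom{4N-2}{N}\binom{3N-2}{N}$ over $N = 2^j$, discard the $-2$ offsets by monotonicity, and apply the estimate $\binom{4N}{N}\binom{3N}{N} \le 2^{6N}/(\pi N \sqrt{2})$ before summing exponents. Your write-up is if anything slightly more explicit about why the telescoping works.
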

\begin{proof}
We need to prove the above upper bound for the multinomial coefficient
\begin{equation}\label{multinomial}
{L(n) \choose 2^{k-1}, 2^{k-1}, 2^{k-2}, 2^{k-2}, \dots, 2, 2, 1, 1} = \prod_{h=1}^k \left({2 \cdot 2^{h} - 2 \choose 2^{h - 1}}\cdot{3 \cdot 2^{h - 1} - 2\choose 2^{h - 1}}\right).
\end{equation}
Now, notice that 
\begin{equation*}
{2 \cdot 2^{h} - 2 \choose 2^{h - 1}}{3 \cdot 2^{h - 1} - 2\choose 2^{h - 1}} \leq
{2 \cdot 2^{h} \choose 2^{h - 1}}{3 \cdot 2^{h - 1}\choose 2^{h - 1}}
= {4 \cdot 2^{h - 1} \choose 2^{h - 1}}{3 \cdot 2^{h - 1}\choose 2^{h - 1}}.
\end{equation*}
We may then use Lemma \ref{binomials}\eqref{quattrotre} to obtain
$${4 \cdot 2^{h - 1} \choose 2^{h - 1}}{3 \cdot 2^{h - 1}\choose 2^{h - 1}} \leq \frac{2^{3 \cdot 2^h}}{\pi 2^{h - 1} \sqrt{2}}.$$
Substituting this into \eqref{multinomial} yields the claim.
\end{proof}

\begin{proposition}\label{above}
Let $n = 2k$ be an even number. Then $|K_n| \leq 2^{6 \cdot 2^k}$.
\end{proposition}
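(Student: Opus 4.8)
The plan is to bound $|K_n|$ by the number of \emph{all} words (not merely the canonical ones) whose letter multiplicities obey the bounds of Proposition~\ref{standard}, and then to compress the resulting sum over lengths into the single multinomial coefficient that Lemma~\ref{howmanywords} already controls. Writing $m_i=\min(2^{i-1},2^{n-i})$, Proposition~\ref{standard} guarantees that the reduced expression of any $x\in K_n$ uses $a_i$ at most $m_i$ times; hence $|K_n|$ is at most the number $S$ of words $w\in W(\{a_1,\dots,a_n\})$ in which each $a_i$ occurs at most $m_i$ times. Any such word has length at most $L:=L(n)=\sum_i m_i=2^{k+1}-2$, so I would write $S=\sum_{\ell=0}^{L}N_\ell$, where $N_\ell$ counts the admissible words of length exactly $\ell$. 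The case $\ell=L$ is special: a length-$L$ admissible word must use every $a_i$ exactly $m_i$ times, so $N_L=\binom{L}{m_1,\dots,m_n}$ is precisely the multinomial coefficient estimated in Lemma~\ref{howmanywords}.

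The key step, and the one I expect to carry the whole argument, is to show that $N_\ell\le N_L$ for every $\ell$. I would prove the stronger fact that $\ell\mapsto N_\ell$ is non-decreasing on $\{0,\dots,L\}$ by exhibiting a surjection. Deleting the last letter of an admissible word of length $\ell+1$ produces an admissible word of length $\ell$, since erasing a letter cannot violate any upper bound $m_i$; this defines a map from the length-$(\ell+1)$ admissible words to the length-$\ell$ ones. When $\ell<L$ this map is surjective: an admissible word of length $\ell$ has total length $\ell<\sum_i m_i$, so some $a_i$ occurs strictly fewer than $m_i$ times, and appending that $a_i$ yields a length-$(\ell+1)$ admissible preimage. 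A surjection forces $N_{\ell+1}\ge N_\ell$, whence $N_\ell\le N_L$ for all $\ell$, and therefore
\[
|K_n|\le S=\sum_{\ell=0}^{L}N_\ell\le (L+1)\,N_L=(L+1)\binom{L}{m_1,\dots,m_n}.
\]

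It then remains to feed in Lemma~\ref{howmanywords}, namely $\binom{L}{m_1,\dots,m_n}\le 2^{6(2^{k}-1)}/\big((\pi\sqrt{2})^{k}\,2^{k(k-1)/2}\big)$, and to note that $2^{6(2^{k}-1)}=2^{6\cdot2^{k}}/64$. Substituting gives
\[
|K_n|\le (2^{k+1}-1)\,\frac{2^{6\cdot2^{k}}}{64\,(\pi\sqrt{2})^{k}\,2^{k(k-1)/2}},
\]
so the desired bound $|K_n|\le 2^{6\cdot 2^{k}}$ reduces to the elementary inequality $2^{k+1}-1\le 64\,(\pi\sqrt{2})^{k}\,2^{k(k-1)/2}$, which holds for all $k\ge 1$ (already $64\cdot 2^{k(k-1)/2}$ dominates $2^{k+1}$, as $k+1\le 6+k(k-1)/2$). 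The only real subtlety is the monotonicity of $N_\ell$: it is exactly what allows the unwieldy sum over all admissible ``shapes'' $(n_1,\dots,n_n)$ — whose sheer number already exceeds the available room — to be replaced by $(L+1)$ copies of the single maximal multinomial, and it deserves to be isolated precisely because the cruder shape-counting estimate is too weak.
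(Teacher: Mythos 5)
Your proof is correct and follows the same overall strategy as the paper: both arguments reduce the problem to the bound $|K_n| \le (L(n)+1)\binom{L(n)}{m_1,\dots,m_n}$ and then invoke Lemma~\ref{howmanywords} together with a crude estimate of the prefactor. The only genuine difference is how the factor $L(n)+1$ is justified: the paper notes that every canonical word extends to (hence is a prefix of) some word of length $L(n)$ realizing the exact multiplicities $m_i$, and a word of length $L(n)$ has $L(n)+1$ prefixes; you instead count all words whose multiplicities are bounded by the $m_i$ and prove, via the delete-last-letter surjection (surjective for $\ell<L$ because an admissible word of deficient length can always be extended by a missing letter), that the count $N_\ell$ is non-decreasing in $\ell$, so the sum over lengths is at most $(L(n)+1)N_{L(n)}$. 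Both routes are valid and land on the identical numerical estimate; the prefix count is a bit slicker, while your monotonicity lemma makes the overcounting fully explicit and correctly isolates the step that rescues the argument from the too-numerous ``shapes.'' The only loose end in either version is the degenerate case $k=0$, where Lemma~\ref{howmanywords} is vacuous but $|K_0|=1$ trivially satisfies the bound.
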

\begin{proof}
The cardinality of $K_{n}$ coincides with the number of canonical words on $n$ letters. Thanks to Proposition \ref{standard}, we know that each such canonical word arises, non uniquely, as a prefix of a word of length $L(n)$ in which $a_i$ occurs $\min(2^{i-1}, 2^{n-i})$ times. We may thus bound $|K_n|$ with the number of all such prefixes, possibly with repetition.

As a word of length $\ell$ has $\ell+1$ prefixes, when $n = 2k$ is even, we obtain
\begin{eqnarray*}
|K_n| & \leq & (L(n) + 1) \cdot {L(n) \choose 2^{k-1}, 2^{k-1}, 2^{k-2}, 2^{k-2}, \dots, 2, 2, 1, 1}\\
& \leq & 2^{k+1} \cdot  \frac{2^{6 \cdot 2^{k} - 6}}{(\pi\sqrt{2})^{k} 2^{k(k-1)/2}}
= 2^{6 \cdot 2^k - (k^2-2k + 10)/2 - k \log \pi}
\leq 2^{6 \cdot 2^k}.
\end{eqnarray*}
\end{proof}

\begin{proposition}\label{odd}
Let $n = 2k+1$ be an odd number. Then $|K_n|\leq 2^{c \cdot 2^{n/2}}$, where $c = \log(432)/\sqrt{2}$.
\end{proposition}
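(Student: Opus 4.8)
The plan is to mirror the proof of Proposition~\ref{above}, reducing the odd case to the even one already treated. As before, every canonical word on the $n=2k+1$ letters $a_1,\dots,a_n$ is a prefix of some word of length $L(n)=3\cdot 2^k-2$ in which each $a_i$ occurs $\min(2^{i-1},2^{n-i})$ times, and each such word has at most $L(n)+1$ prefixes; hence $|K_n|$ is bounded by $(L(n)+1)$ times the associated multinomial coefficient. For odd $n$ the crossover $i-1=n-i$ now falls on the integer $i=k+1$, so the multiset of occurrence counts consists of $1,2,\dots,2^{k-1}$ each appearing twice together with a single central value $2^k$, summing to $3\cdot 2^k-2=L(n)$ as it must.

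The key observation is that this multinomial factors, via the telescoping identity $\binom{N}{m_1,\dots,m_r}=\prod_{j}\binom{m_1+\cdots+m_j}{m_j}$, into the even-case multinomial for $n'=2k$ times one extra binomial coefficient. Indeed, listing the letters in increasing order of occurrence, the first $2k$ factors (coming from the $k$ pairs $1,1,\dots,2^{k-1},2^{k-1}$) reproduce exactly the product $\prod_{h=1}^k\binom{2^{h+1}-2}{2^{h-1}}\binom{3\cdot 2^{h-1}-2}{2^{h-1}}$ of Lemma~\ref{howmanywords}, while the final central letter $a_{k+1}$ of occurrence $2^k$ contributes the single factor $\binom{3\cdot 2^k-2}{2^k}$. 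I would bound the first product by Lemma~\ref{howmanywords} and the extra factor by $\binom{3\cdot 2^k-2}{2^k}\le\binom{3\cdot 2^k}{2^k}\le(27/4)^{2^k}$, applying Lemma~\ref{binomials}\eqref{treuno} with $N=2^k$.

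Multiplying these estimates, the two genuinely double-exponential contributions combine into $2^{6\cdot 2^k}\cdot(27/4)^{2^k}=2^{2^k\log 432}$, because $6+\log(27/4)=\log(64\cdot 27/4)=\log 432$; and since $2^k=2^{(n-1)/2}=2^{n/2}/\sqrt2$, this is precisely $2^{c\cdot 2^{n/2}}$ with $c=\log(432)/\sqrt2$. It then remains to verify that the leftover subexponential factors — namely $L(n)+1\le 3\cdot 2^k$, the denominator $(\pi\sqrt2)^k 2^{k(k-1)/2}$ supplied by Lemma~\ref{howmanywords}, and the $2^{-6}$ coming from $2^{6(2^k-1)}$ — together contribute at most $1$. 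Collecting their exponents of $2$ gives $k(1-\log\pi)-\frac{k^2}{2}+\log 3-6$, which is negative for every $k\ge 1$ since $\log\pi>1$ and each remaining summand is nonpositive, with the small value $n=1$ (where $|K_1|=2$) checked by hand. The only non-routine content is spotting the telescoping factorization that recycles the even estimate, together with the arithmetic identity $6+\log(27/4)=\log 432$; once these are in place the result follows, so I anticipate no real obstacle beyond the bookkeeping of the lower-order terms.
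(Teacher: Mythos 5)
Your proposal is correct and follows essentially the same route as the paper: the same prefix count $(L(n)+1)$ times the multinomial coefficient, the same factorization of that multinomial into the even-case multinomial of Lemma~\ref{howmanywords} times the extra factor $\binom{3\cdot 2^k-2}{2^k}$, the same use of Lemma~\ref{binomials}\eqref{treuno}, and the same arithmetic $6+\log(27/4)=\log 432$. The only (harmless) difference is that you spell out explicitly why the leftover subexponential factors are at most $1$, where the paper simply absorbs them in the final inequality.
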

\begin{proof}
Starting with the same considerations as in the previous proof, one can use Lemma \ref{howmanywords}, Lemma \ref{binomials}\eqref{treuno}, and the identity $L(2k+1)=L(2k)+2^k=3 \cdot 2^k - 2$ to compute
\begin{eqnarray*}
  |K_n| & \leq & (L(n)+1) \cdot {L(n) \choose 2^k, 2^{k-1}, 2^{k-1}, 2^{k-2}, 2^{k-2}, \dots, 2, 2, 1, 1}\\
  & = & (L(n)+1) \cdot {3 \cdot 2^k - 2 \choose 2^k} \cdot {L(2k) \choose 2^{k-1}, 2^{k-1}, 2^{k-2}, 2^{k-2}, \dots, 2, 2, 1, 1}\\
  & \leq & 3\cdot 2^k \cdot {3 \cdot 2^k \choose 2^k} \cdot \frac{2^{6 \cdot (2^{k} - 1)}}{(\pi\sqrt{2})^{k} 2^{k(k-1)/2}}\\
  & \leq & 3\cdot 2^k \cdot \left(\frac{27}{4}\right)^{2^k} \cdot \frac{2^{6 \cdot (2^{k} - 1)}}{(\pi\sqrt{2})^{k} 2^{k(k-1)/2}}\leq 2^{c \cdot 2^{n/2}},
\end{eqnarray*}
where $c = (\log 432)/\sqrt{2} \sim 6.1906\,\,$.
\end{proof}

\begin{theorem}
The following asymptotics hold:
$$\log |K_{2k}| \sim c_{even} 2^k, \qquad \log |K_{2k+1}| \sim c_{odd} 2^{(2k + 1)/2},$$
where $c_{even} \leq 6$ and $c_{odd} \leq (\log_2 432)/\sqrt{2}$ are positive constants.
\end{theorem}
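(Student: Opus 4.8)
The plan is to study the single auxiliary sequence $a_n := 2^{-n/2}\log(2|K_n|)$ and to show that its even- and odd-indexed subsequences each converge; the claimed asymptotics for $\log|K_n|$ will then fall out almost for free. The key observation is that inequality \eqref{increasing} of Proposition \ref{below} says precisely that $a_{n+2} \geq a_n$, so that each of the two subsequences $(a_{2k})_k$ and $(a_{2k+1})_k$ is non-decreasing.

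Next I would supply matching upper bounds. Writing $a_{2k} = 2^{-k}\bigl(1 + \log|K_{2k}|\bigr)$ and invoking Proposition \ref{above}, one gets $a_{2k} \leq 2^{-k} + 6$, so the even subsequence is bounded above; being monotone, it converges to a limit I call $c_{even}$, and letting $k \to \infty$ in the bound yields $c_{even} \leq 6$. The same argument with Proposition \ref{odd} shows $a_{2k+1} \leq 2^{-(2k+1)/2} + (\log 432)/\sqrt{2}$, so the odd subsequence converges to a limit $c_{odd} \leq (\log 432)/\sqrt{2}$. Positivity of both limits is immediate from monotonicity together with the base values $a_0 = \log 2 = 1$ and $a_1 = \sqrt{2}$, computed from $|K_0| = 1$ and $|K_1| = 2$; alternatively it follows from Corollary \ref{atleast}.

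Finally I would translate convergence of $a_n$ back into the stated asymptotics. Since $\log(2|K_n|) = 1 + \log|K_n|$, we have $\log|K_{2k}| = 2^k a_{2k} - 1$, whence
$$\frac{\log|K_{2k}|}{c_{even}\, 2^k} = \frac{a_{2k}}{c_{even}} - \frac{1}{c_{even}\, 2^k} \longrightarrow 1,$$
using $a_{2k} \to c_{even} > 0$; this is exactly $\log|K_{2k}| \sim c_{even}\, 2^k$. The identical computation in the odd case gives $\log|K_{2k+1}| \sim c_{odd}\, 2^{(2k+1)/2}$.

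I do not expect a genuine obstacle here: the argument is a clean \emph{monotone-and-bounded-implies-convergent} sandwich, squeezed between the lower bound of Proposition \ref{below} and the upper bounds of Propositions \ref{above} and \ref{odd}. The only point demanding a little care is the passage from $a_n$, built out of $\log(2|K_n|)$, to $\log|K_n|$ itself: one must check that the additive constant $1$ washes out after division by $2^{n/2}$ and that the limiting constants are strictly positive, so that the ratio tends to $1$ rather than to some spurious value. This is why it is essential to record $c_{even}, c_{odd} > 0$ before dividing by them.
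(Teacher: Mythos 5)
Your proposal is correct and follows essentially the same route as the paper: monotonicity of $2^{-n/2}\log(2|K_n|)$ from Proposition \ref{below}, boundedness from Propositions \ref{above} and \ref{odd}, and convergence of the two subsequences, with the additive constant washing out in the limit. The only difference is that you spell out the final translation to the asymptotics for $\log|K_n|$, which the paper leaves as an easy step.
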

\begin{proof}
The sequence $2^{-k}\log(2|K_{2k}|)$ --- respectively $2^{-(2k+1)/2}\log(2|K_{2k+1}|)$ --- is increasing due to Proposition \ref{below}, so it converges to a finite limit as soon as it is bounded. However, the asymptotically equivalent sequence $2^{-k}\log|K_{2k}|$ (resp. $2^{-(2k+1)/2}\log|K_{2k+1}|$) is bounded by Proposition \ref{above} (resp. Proposition \ref{odd}), and the claim then follows easily.
\end{proof}

We expect, but cannot prove, that $c_{even} = c_{odd}$, i.e., that $\log |K_n| \sim c 2^{n/2}$ for some positive $c \in \Rset$. Notice that the embeddings $K_{2k-1} \subset K_{2k}\subset K_{2k+1}$ imply that
$$\frac{\log |K_{2k-1}|}{\log |K_{2k}|} \leq 1 \leq \frac{\log |K_{2k+1}|}{\log |K_{2k}|}$$
and in the limit
$$c_{odd}/\sqrt{2} \leq c_{even} \leq c_{odd} \sqrt{2}.$$

\section{Structure of longest words in $K_{2k+1}$}

In this section, we prove a technical statement showing that the lower bound in Corollary \ref{atleast} is established, in the odd $n = 2k+1$ case, by the precise number of longest words in $K_{2k+1}$. First of all, we recall that Proposition \ref{standard} is easily proved by induction from the following observation.

\begin{lemma}\label{plusone}
Let $w \in W(A)$ be a canonical word. Then the number of occurrences of $a \in A$ in $w$ is less or equal to the total number of occurrences of letters less than $a$ in~$w$, increased by $1$. A similar bound holds for letters more than $a$ in $w$.
\end{lemma}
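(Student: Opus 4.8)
The plan is to read the inequality off directly from the combinatorial characterization of canonical words, by counting witnesses in the gaps between consecutive occurrences of the fixed letter.

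First I would fix $a \in A$ and list the occurrences of $a$ in $w$ from left to right, say there are $N$ of them. Between two consecutive occurrences there lies a block $u$ such that $aua$ is a subword of $w$ and $u$ contains no further $a$; there are exactly $N-1$ such blocks. By the defining property of canonical words, each of these $u$ must contain at least one letter $c < a$, and symmetrically at least one letter $b > a$. Since the $N-1$ blocks occupy pairwise disjoint positions in $w$, the letters witnessing the condition in distinct blocks are distinct occurrences. Tallying these witnesses, the total number of occurrences in $w$ of letters less than $a$ is at least $N-1$; that is, $N \le (\text{number of occurrences of letters} < a) + 1$, which is precisely the asserted bound. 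The bound for letters greater than $a$ follows by the identical argument, using in each block a letter $b>a$ in place of $c<a$.

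The only point requiring care is the reduction to consecutive occurrences: the canonical-word condition is stated for every subword of the form $aua$, so I would apply it to the special subwords in which $u$ contains no $a$, i.e.\ to adjacent occurrences of $a$. Once this is noticed, the remaining argument is a one-line disjointness count and presents no genuine obstacle; the slight subtlety is purely bookkeeping, namely verifying that distinct gaps contribute distinct occurrences to the tally.
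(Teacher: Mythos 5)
Your argument is correct and is exactly the paper's proof, merely spelled out in more detail: the paper's one-line justification is that between any two occurrences of $a$ there must lie both a lower and a higher letter, and your gap-counting with the $N-1$ disjoint blocks is the explicit bookkeeping behind that sentence. No gap, and no genuinely different route.
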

\begin{proof}
As $w$ is canonical, between any two occurrences of $a$ in $w$ there must lie both a lower and a higher letter.
\end{proof}

\begin{proposition}
Let $n = 2k+1>1$ be an odd number. Then all canonical words of length $L(n)$ in $W(\{a_1, \dots, a_n\})$ are of the form
$$w a_1 a_{n} w' \qquad \mbox{ or } \qquad w a_{n} a_1 w',$$
where $w, w'$ are canonical words of length $L(n-2)$ in $W(\{a_2, \dots, a_{n-1}\})$.
\end{proposition}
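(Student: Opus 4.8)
The plan is to show that any canonical word $W$ of length $L(n)$ is forced into the claimed shape by a chain of increasingly rigid constraints coming from Proposition \ref{standard} and Lemma \ref{plusone}. First I would observe that $L(n)=\sum_{i=1}^n\min(2^{i-1},2^{n-i})$ (a direct computation, matching $3\cdot 2^k-2$), while each letter $a_i$ occurs at most $\min(2^{i-1},2^{n-i})$ times by Proposition \ref{standard}; hence a word of length $L(n)$ must use every letter exactly the maximal number of times. In particular $a_1$ and $a_n$ each occur once, the central letter $b=a_{k+1}$ occurs $2^k$ times, and---crucially---both inequalities of Lemma \ref{plusone} become equalities for every letter.

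Next I would extract the $b$-skeleton. Applying the equality case of Lemma \ref{plusone} to $b$ in both of its forms shows that between two consecutive occurrences of $b$ there lie exactly one letter smaller than $b$ and exactly one letter larger than $b$, and that no letter occurs before the first or after the last $b$. Hence $W=b\,x_1\,b\,x_2\cdots x_{2^k-1}\,b$, where each gap $x_j$ consists of a single small letter (from $\{a_1,\dots,a_k\}$) and a single large letter (from $\{a_{k+2},\dots,a_n\}$).

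The heart of the argument, and the step I expect to be the main obstacle, is locating $a_1$ and $a_n$ inside this skeleton. Reading off the small letters of $W$ in order yields a subword $C$ on $\{a_1,\dots,a_k\}$ in which, by the equality case of Lemma \ref{plusone} applied to each $a_i$ with $i\le k$, between consecutive occurrences of $a_i$ there is exactly one smaller letter and none outside their span. I would show by downward induction on $i$ that this forces $C$ to be the nested ``ruler'' word: the largest letter $a_k$ occupies every other position, the next largest occupies every other remaining position, and so on. Such a word is a palindrome of odd length $2^k-1$, so the unique occurrence of $a_1$ must sit at its exact center, i.e.\ in gap $x_{2^{k-1}}$. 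The symmetric argument applied to the large letters places the unique $a_n$ in the same central gap $x_{2^{k-1}}$, so that gap equals $a_1a_n$ or $a_na_1$.

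Finally I would conclude by splitting $W=w\,(a_1a_n)\,w'$ or $W=w\,(a_na_1)\,w'$ at the central gap. Since $w$ and $w'$ are contiguous subwords of the canonical word $W$ they are themselves canonical, and because $a_1,a_n$ are consumed inside the central gap they involve only $\{a_2,\dots,a_{n-1}\}$. A direct count of the $2^{k-1}$ copies of $b$ and the $2^{k-1}-1$ gaps on either side gives $|w|=|w'|=3\cdot 2^{k-1}-2=L(n-2)$, which is exactly the claim.
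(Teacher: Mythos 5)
Your argument is correct, but it takes a genuinely different route from the paper's. The paper splits the word as $u = w\,a_1\,u'$ at the unique occurrence of $a_1$ and then works purely with multiplicities: each $a_i$ with $2\le i\le k+1$ occurs $2^{i-1}$ times in $u$ but at most $2^{i-2}$ times in each of $w$ and $u'$ (Proposition \ref{standard} applied to the smaller alphabets), so the counts split evenly; one further application of Lemma \ref{plusone} to $a_{k+1}$ inside $w$ pins down the multiplicities of the high letters there, giving $|w| = L(n-2)$, and the symmetric computation on the right of $a_n$ together with $L(n) = 2L(n-2)+2$ forces $a_1$ and $a_n$ to be adjacent. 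You instead reconstruct the positional skeleton of the word around the middle letter $b = a_{k+1}$ and show that the subwords of small and of large letters are forced ruler palindromes whose centers coincide. This is somewhat more work, but it buys strictly more information (the complete gap structure of every longest word, not just the location of $a_1a_n$), and it localizes the central block directly rather than by a length count; all the counting steps you rely on (exactly $2^k-1$ gaps between consecutive $b$'s versus exactly $2^k-1$ occurrences of small and of large letters, and the analogous counts inside the small-letter and large-letter subwords) check out. One sentence is overstated: it is not true that both inequalities of Lemma \ref{plusone} become equalities for every letter --- for $a_i$ with $i<k+1$ only the ``smaller letters'' bound is tight, and for $i>k+1$ only the ``larger letters'' bound is, with both tight only for $b$ itself. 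This is harmless, since your argument only ever invokes the tight case where it actually holds (both bounds for $b$, the lower one for the small letters, the upper one for the large letters), but you should correct the claim.
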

\begin{proof}
First of all, if $w, w'$ are canonical words of length $L(n-2)$ in $W(\{a_2, \dots, a_{n-1}\})$ then $w a_1 a_{n} w'$ is still canonical, of length $2L(n-2) + 2 = L(n)$. As $L(1) = 1$, and since words of length $1$ are trivially canonical, an easy induction shows that $W(\{a_1, \dots, a_n\})$ indeed contains words of length $L(n)$. Moreover, in any such word, each letter $a_i$ occurs precisely $2^{\min(i-1, n-i)}$ times, as these numbers add up to $L(n)$.

Let now $u$ be a canonical word of length $L(n)$ in $W(\{a_1, \dots, a_n\})$. Both $a_1$ and $a_n$ occur exactly once, and we may assume without loss of generality that $a_1$ occurs on the left of $a_n$, so that $u = w a_1 u'$, where $a_n$ occurs in the subword $u'$. Notice that both $w$ and $u'$ need to be canonical words.

The number of occurrences of $a_i$ for $2\leq i \leq k+1$ in $u$ is exactly $2^{i-1}$, whereas it occurs in $w \in W(\{a_2, \dots, a_{n-1}\})$ and in $u' \in W(\{a_2, \dots, a_n\})$ at most $2^{i-2}$ times. This shows that, for $2\leq i \leq k+1$, $a_i$ occurs in both $w$ and $u'$ precisely $2^{i-2}$ times. By Lemma \ref{plusone}, $a_{k+1}$ occurs in $w \in W(\{a_2, \dots, a_{n-1}\})$ at most the number of total occurrences in $w$ of higher letters, increased by $1$. As a consequence, each $a_{n-1-i}, 0 \leq i \leq k-2$, occurs in $w$ precisely $2^{i}$ times. We conclude that $w$ has length $L(n-2)$.

We may repeat this argument for $a_n$ in order to show that the subword on the 
right of $a_n$ has length $L(n-2)$. As $L(n) = 2 L(n-2) + 2$, this shows that $a_1$ and $a_n$ sit next to each other, so that $u$ is of the form $w a_1 a_nw'$ for some choice of canonical words $w, w'$ of length $L(n-2)$.
\end{proof}

Notice that this statement is false in the even $n$ case, as the canonical word $a_2 a_3 a_1 a_2 a_4 a_3$ of length $L(4) = 6$ provides an immediate counterexample.

\begin{cor}\label{below2}
Let $n = 2k+1$ be an odd number. Then the number of canonical words of length $L(n)$ in $W(\{a_1, \dots, a_n\})$ equals $\frac{1}{2}\cdot 2^{2^{\lceil{n/2}\rceil}}$.
\end{cor}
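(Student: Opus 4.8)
The plan is to turn the statement into a bijective count resting on the structural result just established. Write $N_n$ for the number of canonical words of length $L(n)$ in $W(\{a_1,\dots,a_n\})$. The preceding Proposition describes every such word as $w\,a_1a_n\,w'$ or $w\,a_na_1\,w'$, with $w,w'$ canonical of length $L(n-2)$ on the middle alphabet $\{a_2,\dots,a_{n-1}\}$; since $K(A)$ depends up to isomorphism only on $|A|$, these $w$ and $w'$ are counted by $N_{n-2}$. My first goal is thus to promote this description to an honest bijection and read off the recurrence $N_n=2\,N_{n-2}^2$.

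To see that the decomposition is a bijection, I would argue as follows. By Proposition \ref{standard} the letters $a_1$ and $a_n$ each occur at most once, and in a word of maximal length $L(n)$ each occurs exactly once; hence a length-$L(n)$ canonical word $u$ determines unambiguously the single position of $a_1$ and of $a_n$, and the preceding Proposition forces these two positions to be adjacent. Thus $u$ recovers, with no ambiguity, the relative order of $a_1$ and $a_n$ (two genuinely distinct possibilities, as $a_1\neq a_n$) together with the prefix $w$ and the suffix $w'$, each canonical of length $L(n-2)$. Conversely, the first half of the preceding Proposition guarantees that every such triple produces a bona fide length-$L(n)$ canonical word. The map $(\text{order},w,w')\mapsto u$ is therefore a bijection onto the set being counted, and since $w$ and $w'$ range independently over the $N_{n-2}$ longest words on an $(n-2)$-letter alphabet, we obtain $N_n=2\,N_{n-2}^2$.

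It then remains to solve this recurrence. The induction bottoms out at $n=3$, where the middle alphabet is the single letter $a_2$, giving $N_1=1$ and hence $N_3=2$. Taking base-$2$ logarithms converts $N_n=2\,N_{n-2}^2$ into the linear recurrence $\log N_{2k+1}=1+2\log N_{2k-1}$ with $\log N_1=0$, whose solution is $\log N_{2k+1}=2^{k}-1$; this is exactly the asserted double-exponential count $N_{2k+1}=\tfrac12\cdot 2^{2^{k}}$.

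I expect the only delicate step to be the bijectivity claim in the second paragraph: surjectivity is literally the content of the preceding Proposition, but injectivity hinges on the fact that $a_1$ and $a_n$ are forced to appear exactly once and next to each other, so that the triple $(\text{order},w,w')$ is uniquely reconstructible from $u$. Everything after that is a routine recurrence. As a closing remark I would stress the conceptual point flagged at the start of the section: the identity $N_n=2\,N_{n-2}^2$ is the exact, equality form of the inequality $|K_{n+2}|\geq 2|K_n|^2$ of Proposition \ref{below}, so the longest words are precisely the configurations realising the lower-bound construction underlying Corollary \ref{atleast}.
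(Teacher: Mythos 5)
Your argument is the intended one: the corollary is stated in the paper without a separate proof, as an immediate consequence of the preceding Proposition, and your reading of that Proposition as a bijection $(\text{order},w,w')\mapsto u$ together with the recurrence $N_n=2N_{n-2}^2$, $N_1=1$, is exactly how that consequence is meant to be drawn. The bijectivity point you flag as delicate is indeed fine: $a_1$ and $a_n$ each occur exactly once in a word of length $L(n)$ and the Proposition forces them to be adjacent, so the triple is recovered uniquely from $u$.

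However, you should not have concluded that your answer ``is exactly the asserted count.'' Solving the recurrence gives $N_{2k+1}=2^{2^k-1}=\tfrac12\cdot 2^{2^{\lfloor n/2\rfloor}}$, whereas the corollary as printed asserts $\tfrac12\cdot 2^{2^{\lceil n/2\rceil}}=\tfrac12\cdot 2^{2^{k+1}}$; in your last paragraph you silently replaced $\lceil n/2\rceil$ by $k$. The two disagree already at $n=3$: there are exactly two canonical words of length $L(3)=4$, namely $a_2a_1a_3a_2$ and $a_2a_3a_1a_2$, matching your $2^{2^1-1}=2$ and not the printed $\tfrac12\cdot 2^{2^2}=8$. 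So the printed exponent is a typo (ceiling where floor is meant) and your computation is the correct one, but a careful write-up must flag the discrepancy rather than assert agreement. Relatedly, your closing remark overstates the link to Proposition \ref{below}: the lower-bound construction there lets $w,w'$ range over \emph{all} canonical words on the middle alphabet, not only the longest ones, which is precisely why the lower bound $\tfrac12\cdot 2^{2^{k+1}}$ for $|K_{2k+1}|$ strictly exceeds the number $\tfrac12\cdot 2^{2^{k}}$ of longest words.
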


\section*{Acknowledgments}
We are grateful to the anonymous referee for suggesting several improvements to the original version of this note.

\vfill

\end{document}